\DeclareMathOperator{\Nor}{Nor}
\DeclareMathOperator{\interior}{Int}
\DeclareMathOperator{\loc}{loc}
\newtheorem{Lemma}{Lemma}[section]
\newtheorem{Corollary}[Lemma]{Corollary}
\newtheorem{Theorem}[Lemma]{Theorem}
\theoremstyle{definition}
\newtheorem{Definition}[Lemma]{Definition}
\theoremstyle{remark}
\newtheorem{Remark}[Lemma]{Remark}
\newtheoremstyle{proof*}
{3pt}
{3pt}
{\rmfamily}
{}
{\bfseries}
{.}
{.5em}
{\thmnote{#3}}
\theoremstyle{proof*}
\newtheorem*{proof*}{}
\title{Distance functions with dense singular sets}
\author{Mario Santilli}
\begin{document}

\maketitle

\begin{abstract} 
	We characterize the denseness of the singular set of the distance function from a $ \mathcal{C}^1 $-hypersurface in terms of an inner ball condition and
we address the problem of the existence of viscosity solutions of the Eikonal equation whose singular set (i.e. set of non-differentiability points) is not no-where dense.
\end{abstract}

\section{Introduction}

The distance function $ \bm{\delta}_K $ from a closed subset $ K \subseteq \mathbf{R}^n $ is a viscosity solution of the Eikonal equation $ | \nabla u |^2 = 1 $ on $ \mathbf{R}^n \sim K $ and it plays a central role in the theory of Hamilton-Jacobi equations. The function $ \bm{\delta}_K $ is locally semiconcave on $ \mathbf{R}^n \sim K $ and it is continuously differentiable on $ \mathbf{R}^{n} \sim (K \cup \overline{\Sigma(K)}) $ with a locally Lipschitz gradient, where $ \Sigma(K) $ is the set of non-differentiability points of $ \bm{\delta}_K $. In view of these facts the topological and measure-theoretic properties of the sets $ \Sigma(K) $ and $ \overline{\Sigma(K)} $ have always been a central theme of research (see \cite{MR1695025}, \cite{MR1941909}, \cite{MR2094267}, \cite{MR2336304}, \cite{MR3038120}). The set $ \Sigma(K) $ can be covered, outside a set of $ \mathcal{H}^{n-1} $ measure zero, by the union of countably many $ \mathcal{C}^2 $ hypersurfaces (see \cite{MR536060}). Assuming at least that $K$ is a closed $ \mathcal{C}^2 $ hypersurface, the Lebesgue measure of $ \overline{\Sigma(K)} $ is zero and upper bounds on the Hausdorff dimension of the set $\overline{\Sigma(K)}$ are known (\cite{MR1695025}, \cite{MR1941909}, \cite{MR2094267}, \cite{MR2336304}); see also \cite{MR3568029} for the case of $ \mathcal{C}^{1,1} $ hypersurfaces that are almost $ \mathcal{C}^2 $. On the other hand a well known example of Mantegazza and Mennucci in \cite[pag.\ 10]{MR1941909} describes a convex body $ C $ with $ \mathcal{C}^{1,1} $-boundary such that $\overline{\Sigma(\partial C)}$ is a no-where dense subset of $ C $ with positive Lebesgue measure. This example raises the natural question to understand if (and under which hypothesis) the set $ \overline{\Sigma(K)} $ can have interior points. It is particularly interesting the case $ K = \partial C $, where $ C $ is a convex body with $ \mathcal{C}^{1,1} $ boundary, since if this example exists then one can construct by a well known procedure a viscosity solution of the Eikonal equation on all of $ \mathbf{R}^n $ whose singular set is not no-where dense. This question was addressed in \cite[Theorem 1, footnote pag.\ 520]{MR2398240}, which contains the assertion that every viscosity solution of the Eikonal equation on an open subset of $ \mathbf{R}^n $ must be differentiable outside a no-where dense set. Unfortunately the proof of this statement is invalid (see \cite{Erratum}) and, as we show in this paper, it turns out that the statement is actually not true.

In this note we aim to establish the existence of a counterexample to the aforementioned assertion in \cite[Theorem 1]{MR2398240} and to provide geometric conditions on $ \mathcal{C}^1 $-hypersurfaces $ K $ that ensures that $ \overline{\Sigma(K)} $ has non empty interior. Specifically we prove the following facts: 
\begin{enumerate}
	\item If $ \Omega $ is an open subset with $ \mathcal{C}^1 $ boundary then $ \Omega \sim \overline{\Sigma(\partial \Omega)} \neq \varnothing $ if and only if $ \Omega $ satisfies an inner uniform ball condition on some open subset of $ \partial \Omega $ (see \ref{inner ball}-\ref{theorem}).
\item If $ K $ is a closed and connected $ \mathcal{C}^1 $ hypersurface that is $ \mathcal{C}^2 $ unrectifiable, then  $ \overline{\Sigma(K)} = \mathbf{R}^n $ (see \ref{Th 2}).
\item For most of the convex bodies $C$ with $ \mathcal{C}^1 $ boundary (in the sense of Baire Category) the set $ \Sigma(\partial C) $ is dense in $C$ (see \ref{Th 1}).
\item There exists a convex body $C$ with $ \mathcal{C}^{1,1}$ boundary such that $ \overline{\Sigma(\partial C)} $ has interior points (see \ref{Cor 2}). 
\item There exists viscosity solutions of the Eikonal equation $|\nabla u|^2 =1 $ on all of $ \mathbf{R}^n $ that are not differentiable on a set that is \emph{not} no-where dense (see \ref{Cor 2}).
\end{enumerate}

{\small\textbf{Acknowledgements.} I wish to thank Professor Ludovic Rifford, who kindly points me out the flaw in the proof of \cite[Theorem 1]{MR2398240}}; see \cite{Erratum}.

\section{Inner ball condition and dense singular sets}
In this section for an open set $ \Omega $ with $ \mathcal{C}^1 $ boundary $ K $ we characterize the denseness of the set of non differentiability points of $ \bm{\delta}_K $ in $ \Omega $ in terms of an inner ball condition (see \ref{theorem}). We use then this result to show that closed $ \mathcal{C}^1 $-hypersurfaces that are $ \mathcal{C}^2 $-unrectifiable have a singular set dense in all of $ \mathbf{R}^n $ (see \ref{Th 2}).

\begin{Definition}\label{submanifolds}
	Let $ k \geq 1 $ be an integer, $ 0 \leq \alpha \leq 1 $ and $ M \subseteq \mathbf{R}^n $. We say that $ M $ is a $ \mathcal{C}^{k, \alpha}$-hypersurface if and only if for every $ a \in M $ there exists an open subset $ U $ of $ \mathbf{R}^n $, an $ n-1 $ dimensional subspace $ Z $ of $ \mathbf{R}^n $ and a $ \mathcal{C}^{k, \alpha} $-diffeomorphism\footnote{This is a map $ \sigma \in \mathcal{C}^{k , \alpha}(U , \mathbf{R}^n) $ such that $ \sigma(U) $ is an open subset of $ \mathbf{R}^n $ and $ \sigma^{-1} \in \mathcal{C}^{k, \alpha}(\sigma(U), \mathbf{R}^n) $.} $ \sigma : U \rightarrow \mathbf{R}^n $ such that 
	\begin{equation*}
	\sigma(U \cap M) = Z \cap \sigma(U).
	\end{equation*}
\end{Definition}

\begin{Definition}
	Let $ k \geq 1 $ be an integer. A $ k $-manifold is an Hausdorff space which is locally homeomorphic to an open subset of $ \mathbf{R}^k $.
\end{Definition}

Let $ K \subseteq \mathbf{R}^n $ be a closed set and let $ \bm{\xi}_K: \mathbf{R}^n \rightarrow \bm{2}^{K} $ be the nearest point projection onto $ K $:
\begin{equation*}
\bm{\xi}_K(x) = K  \cap \big\{a: | x - a| = \bm{\delta}_K(x) \big\}
\end{equation*}
for every $ x \in \mathbf{R}^n $. The singular set of $ \bm{\delta}_K $ is defined as
\begin{equation*}
	\Sigma(K) = (\mathbf{R}^n \sim K) \cap \{ x : \textrm{$ \bm{\delta}_K $ is not differentiable at $ x $}   \}.
\end{equation*}
\begin{Remark}
The reader might wonder what are the points in $ K $ where $ \bm{\delta}_K $ is not differentiable. In this regard one observes that if $ x \in K $ and $ \bm{\delta}_K $ is differentiable at $ x $ then $ \nabla \bm{\delta}_K(x) =0 $. It follows that if $ x \in K $ and the tangent cone (see \cite[4.3]{MR0110078}) of $ K $ at $ x $ is not equal to $ \mathbf{R}^n $ then $ \bm{\delta}_K $ is not differentiable at $ x $. In particular if $ K $ is $ \mathcal{C}^1 $ hypersurface [resp.\ $K$ is a convex body] $ \bm{\delta}_K $ is not differentiable at all points of $ K $ [resp.\ all points of $ \partial K $].
\end{Remark}

\begin{Remark}
	It is well known that $ \bm{\delta}_K $ is locally semiconcave in $ \mathbf{R}^n \sim K $, see \cite[2.2.2]{MR2041617}. As a consequence of general structural results on the singular sets of convex functions (see \cite{MR536060}) we deduce that $ \Sigma(K) $ can be covered, outside a set of $ \mathcal{H}^{n-1} $ measure zero, by the union of countably many $ \mathcal{C}^2 $-hypersurfaces. 
\end{Remark}

We recall from \cite[3.4.5]{MR2041617} a well known characterization of $ \Sigma(K) $.

\begin{Lemma}\label{aux remark}
	Suppose $ K  \subseteq \mathbf{R}^n $ is closed and $ x \notin K $.
	
	Then $ x \notin \Sigma(K) $ if and only if $ \bm{\xi}_K(x) $ is a singleton and
	\begin{equation*}
	\nabla \bm{\delta}_{K}(x)  = \frac{x- \bm{\xi}_{K}(x)}{\bm{\delta}_{K}(x)}.
	\end{equation*}
\end{Lemma}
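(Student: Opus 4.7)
My plan is to handle the two directions of the biconditional separately; the real content lies entirely in the forward direction. The reverse implication is definitional: the assertion that $\nabla \bm{\delta}_K(x)$ equals a specific vector presupposes that the gradient exists at $x$, so $x \notin \Sigma(K)$ follows immediately from the definition of $\Sigma(K)$.

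For the forward implication, I would fix an arbitrary $y \in \bm{\xi}_K(x)$ and show both that the gradient formula must hold with this particular $y$ and that $y$ is uniquely determined by $x$ and $\nabla \bm{\delta}_K(x)$. The key geometric observation is that $\bm{\delta}_K$ is affine on the segment from $y$ to $x$. For $t \in [0,1]$, the upper bound $\bm{\delta}_K(y + t(x-y)) \leq t|x-y|$ follows trivially from $y \in K$, while the matching lower bound $\bm{\delta}_K(y + t(x-y)) \geq \bm{\delta}_K(x) - (1-t)|x-y| = t\bm{\delta}_K(x)$ is a direct consequence of the $1$-Lipschitz property of $\bm{\delta}_K$. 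Setting $v = (x-y)/|x-y|$, this gives $\bm{\delta}_K(x + sv) = \bm{\delta}_K(x) + s$ for $s \in [-\bm{\delta}_K(x),0]$, so the one-sided derivative of $\bm{\delta}_K$ at $x$ in the direction $-v$ equals $-1$.

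Differentiability of $\bm{\delta}_K$ at $x$ then forces $\langle \nabla \bm{\delta}_K(x), v \rangle = 1$, while the global $1$-Lipschitz property gives $|\nabla \bm{\delta}_K(x)| \leq 1$. Together these yield $\nabla \bm{\delta}_K(x) = v = (x-y)/\bm{\delta}_K(x)$. Since this formula recovers $y$ uniquely from $x$ and $\nabla \bm{\delta}_K(x)$ via $y = x - \bm{\delta}_K(x) \nabla \bm{\delta}_K(x)$, and since $y$ was an arbitrary element of $\bm{\xi}_K(x)$, the projection $\bm{\xi}_K(x)$ must be a singleton and the stated formula for $\nabla \bm{\delta}_K(x)$ holds. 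No real obstacle appears in this argument; the only step requiring a moment of care is the two-sided estimate giving affineness of $\bm{\delta}_K$ on $[y,x]$, after which the rest is bookkeeping with the $1$-Lipschitz constraint.
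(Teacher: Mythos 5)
The paper itself does not prove this lemma: it imports it verbatim from Cannarsa--Sinestrari \cite[3.4.5]{MR2041617}, so your attempt has to be measured against the standard proof of that result. Your forward implication is correct and is exactly the classical argument: for any $y\in\bm{\xi}_K(x)$ the two-sided estimate shows $\bm{\delta}_K$ is affine on the segment $[y,x]$, hence the directional derivative of $\bm{\delta}_K$ at $x$ along $v=(x-y)/|x-y|$ equals $1$; combined with $|\nabla\bm{\delta}_K(x)|\le 1$ and the equality case of Cauchy--Schwarz this forces $\nabla\bm{\delta}_K(x)=v$, and the formula $y=x-\bm{\delta}_K(x)\nabla\bm{\delta}_K(x)$ recovers $y$ uniquely. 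No objection to any of that.

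The gap is in the converse, which you dismiss as ``definitional.'' That is defensible only under the most literal parsing, in which the clause $\nabla\bm{\delta}_K(x)=(x-\bm{\xi}_K(x))/\bm{\delta}_K(x)$ is read as presupposing differentiability. But the result being recalled --- and, more to the point, the way the lemma is actually invoked in the proof of Theorem \ref{theorem}, where the author deduces $x\notin\overline{\Sigma(K)}$ from the single fact that $\bm{\xi}_K$ is single-valued on an open set --- is the stronger statement: if $\bm{\xi}_K(x)$ is a singleton then $\bm{\delta}_K$ \emph{is} differentiable at $x$, with the stated gradient. That half is genuinely nontrivial and your argument says nothing about it; the one-sided derivative along $-v$ being $-1$ plus the Lipschitz bound does not by itself yield existence of the full differential. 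The standard proof uses that $\bm{\delta}_K$ is locally semiconcave on $\mathbf{R}^n\sim K$ (e.g.\ because $\bm{\delta}_K^2-|\cdot|^2$ is an infimum of affine functions, hence concave): the superdifferential of a semiconcave function is nonempty and is the convex hull of its reachable gradients, each of which has the form $(x-y)/\bm{\delta}_K(x)$ with $y\in\bm{\xi}_K(x)$ by upper semicontinuity of $\bm{\xi}_K$; single-valuedness of $\bm{\xi}_K(x)$ then makes the superdifferential a singleton, which for semiconcave functions is equivalent to differentiability. Without some input of this kind, the implication ``$\bm{\xi}_K(x)$ a singleton $\Rightarrow x\notin\Sigma(K)$'' --- the direction the paper actually uses --- is not established by your proposal.
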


\begin{Definition}
	For $ x \in \mathbf{R}^n $ and $ r > 0 $ we define
	\begin{equation*}
	\mathbf{U}(x, r) = \mathbf{R}^n \cap \{ y : |y-x| < r  \}.
	\end{equation*}
\end{Definition}

\begin{Definition}\label{inner ball}
	Suppose $ \Omega $ is an open subset of $ \mathbf{R}^n $ and $ S \subseteq \partial \Omega $. We say that $ \Omega $ satisfies an \emph{inner uniform ball condition on $ S $} if and only if there exists $ \rho > 0 $ such that each $ x \in S $ belongs to the boundary of an open ball $ B $ of radius $ \rho $ which is contained in $ \Omega $.
\end{Definition}

\begin{Theorem}\label{theorem}
Let $ K \subseteq \mathbf{R}^n $ be a closed  $ \mathcal{C}^1 $-hypersurface and let $ \Omega $ be an open subset of $ \mathbf{R}^n $ such that $ \partial \Omega = K $.

Then $ \Omega \sim \overline{\Sigma(K)} \neq \varnothing $ if and only if $ \Omega $ satisfies an inner uniform ball condition on a non-empty open subset of $ K $.
\end{Theorem}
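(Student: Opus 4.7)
My plan is to prove the two implications separately, using Lemma~\ref{aux remark} together with the standard fact that, for a $\mathcal{C}^1$-hypersurface $K$, the segment from any point $y\notin K$ to any of its nearest points $x\in K$ is orthogonal to the tangent hyperplane $T_xK$. Throughout I write $\nu(x)$ for the inward unit normal (pointing into $\Omega$) to $K$ at $x\in K$, which is continuous on $K$.

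For the ``if'' direction, I fix $x_0$ in the open set $S\subseteq K$ carrying inner balls of radius $\rho$ and set $y_0 := x_0 + \tfrac{\rho}{2}\nu(x_0)$, the midpoint of the segment joining $x_0$ to the centre $c := x_0 + \rho\nu(x_0)$ of the corresponding inner ball. Since this ball lies in $\Omega$ and $K = \partial\Omega$, every $z\in K$ satisfies $|z-c|\geq \rho$, and a triangle inequality argument gives $\bm{\delta}_K(y_0) = \rho/2$ and $\bm{\xi}_K(y_0) = \{x_0\}$. By upper semicontinuity of $\bm{\xi}_K$ and continuity of $\bm{\delta}_K$ I can choose an open ball $V$ around $y_0$, contained in $\Omega$, so that $\bm{\xi}_K(y)\subseteq S$ and $\bm{\delta}_K(y) < \rho$ for all $y\in V$. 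For such $y$ and $x_1,x_2\in\bm{\xi}_K(y)$, with $c_i := x_i + \rho\nu(x_i)$ the centres of the corresponding inner balls, orthogonality yields $y = x_i + \bm{\delta}_K(y)\nu(x_i)$ and $|y-c_i| = \rho - \bm{\delta}_K(y)$; combining $|x_2-c_1|\leq |x_2-y| + |y-c_1| = \rho$ with $|x_2-c_1|\geq \rho$ (because $x_2\in K$ lies outside $\mathbf{U}(c_1,\rho)\subseteq\Omega$) forces equality in the triangle inequality, placing $y$ on the segment from $c_1$ to $x_2$. Writing $y = (1-\lambda)x_i + \lambda c_1$ with $\lambda = \bm{\delta}_K(y)/\rho$ for $i=1,2$ and subtracting gives $x_1 = x_2$. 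Thus $\bm{\xi}_K(y)$ is a singleton for every $y\in V$, so Lemma~\ref{aux remark} yields $V\cap\Sigma(K) = \varnothing$; since $V$ is open, $V\subseteq \Omega\sim\overline{\Sigma(K)}$.

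For the ``only if'' direction I take $y_0\in \Omega\sim\overline{\Sigma(K)}$. Since $\bm{\delta}_K$ is $\mathcal{C}^1$ with locally Lipschitz gradient on this open set, I can fix a neighborhood $W$ of $y_0$ on which $\bm{\delta}_K\geq t_0/2$, where $t_0 := \bm{\delta}_K(y_0) > 0$, and on which the single-valued selection $\bm{\xi}_K(y) = y - \bm{\delta}_K(y)\nabla\bm{\delta}_K(y)$ provided by Lemma~\ref{aux remark} is continuous. For each $y\in W$ the open ball of radius $t_0/2$ centred at $\bm{\xi}_K(y) + (t_0/2)\nabla\bm{\delta}_K(y)$ is contained in $\mathbf{U}(y,\bm{\delta}_K(y))\subseteq\Omega$ and is tangent to $K$ at $\bm{\xi}_K(y)$, so $\Omega$ satisfies the inner uniform ball condition of radius $t_0/2$ on $\bm{\xi}_K(W)\subseteq K$.

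It remains to show that $\bm{\xi}_K(W)$ contains a non-empty open subset of $K$, and this is the delicate step. Let $H$ be the affine hyperplane through $y_0$ orthogonal to $\nu_0 := \nabla\bm{\delta}_K(y_0)$. I claim $\bm{\xi}_K|_{W\cap H}$ is injective on a neighborhood of $y_0$: if $y_1,y_2\in W\cap H$ share a common nearest point $x$, then $y_i = x + \bm{\delta}_K(y_i)\nu(x)$, so $y_1-y_2$ is parallel to $\nu(x)$; on the other hand $y_1-y_2\perp\nu_0$, and for $y_1,y_2$ close to $y_0$ one has $\nu(x)\cdot\nu_0 > 0$ by continuity of $\nu$ on $K$ and of $\bm{\xi}_K$ on $W$, forcing $y_1 = y_2$. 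Since $W\cap H$ is an open subset of an affine $(n-1)$-space and $K$ is a topological $(n-1)$-manifold, the invariance of domain theorem applied in a local chart of $K$ around $\bm{\xi}_K(y_0)$ produces the required open subset of $K$. I expect this last step to be the main obstacle: the $\mathcal{C}^{1,1}_{\loc}$-regularity of $\bm{\delta}_K$ is too weak to apply an inverse function theorem argument directly, so the topological detour via invariance of domain appears unavoidable.
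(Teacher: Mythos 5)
Your proof is correct, and it rests on the same two pillars as the paper's argument: the equivalence between differentiability of $\bm{\delta}_K$ off $K$ and single-valuedness of $\bm{\xi}_K$ (Lemma \ref{aux remark}), and invariance of domain to produce a non-empty open subset of $K$. The differences are in how two steps are implemented, and in both cases your version is somewhat more economical. For the ``only if'' direction the paper slices $\Omega\sim\overline{\Sigma(K)}$ by the level set $\{x:\bm{\delta}_K(x)=\bm{\delta}_K(w)\}$, invoking Clarke's implicit function theorem to see that this level set is an $(n-1)$-manifold, and proves injectivity of $\bm{\xi}_K$ on it by excluding the antipodal configuration (which would force $|x-y|=2\bm{\delta}_K(w)$, too large); your affine hyperplane $H\perp\nabla\bm{\delta}_K(y_0)$ is an $(n-1)$-manifold for free, and your injectivity argument ($y_1-y_2$ is simultaneously parallel to $\nu(x)$ and orthogonal to $\nu_0$, while $\nu(x)\cdot\nu_0>0$ near $y_0$ by continuity of $\bm{\xi}_K$ and $\nu$) is equally valid, so Clarke's theorem is avoided. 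For the ``if'' direction the paper constructs the whole tube $\phi[S\times(0,\rho)]$ with $\phi(a,t)=a+t\nu(a)$, proves $\phi$ injective, and applies invariance of domain a second time to see that the tube is open; you instead exhibit a single open ball $V$ around the midpoint $y_0=x_0+\tfrac{\rho}{2}\nu(x_0)$ via upper semicontinuity of $\bm{\xi}_K$, and your equality-case-of-the-triangle-inequality argument for single-valuedness on $V$ is correct, which dispenses with the second use of invariance of domain entirely. One small caveat that you share with the paper's own proof and should make explicit: if $\Omega$ lies locally on both sides of $K$ (e.g.\ $\Omega=\mathbf{R}^n\sim K$), the ``inner'' normal is ambiguous and the hypothesis only provides an inner ball on \emph{some} side of each $x\in S$; one should first pass to a smaller non-empty open subset of $S$ on which the side can be chosen consistently (the two sets of points whose inner ball lies on a prescribed side are relatively closed and cover $S$, so Baire's theorem applies), after which your computation of $|y-c_1|=\rho-\bm{\delta}_K(y)$ is justified.
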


\begin{proof}
Suppose $ \Omega \sim \overline{\Sigma(K)} \neq \varnothing $. Choose $ w \in \Omega \sim \overline{\Sigma(K)} $ and $ 0 < \epsilon < \bm{\delta}_{K}(w) $ such that $ \mathbf{U}(w, \epsilon)  \subseteq \Omega \sim \overline{\Sigma(K)} $. Then define
	\begin{equation*}
		S = \mathbf{U}(w, \epsilon)  \cap \{ x : \bm{\delta}_{K}(x) = \bm{\delta}_{K}(w)  \}.
	\end{equation*}
Since, by \ref{aux remark}, the Lipschitz function $ \bm{\delta}_{K} $ is differentiable at each $ x \in \mathbf{U}(w, \epsilon)$ and $|\nabla \bm{\delta}_{K}(x)| = 1 $, we apply the implicit function theorem of Clarke \cite[7.11]{MR709590} to conclude that $ S $ is an $(n-1)$-manifold. Moreover $ \bm{\xi}_{K}| S $ is continuous by \cite[4.8(4)]{MR0110078}. We prove that $ \bm{\xi}_{K}|S $ is an injective map. Suppose $  x, y \in S $ such that $\bm{\xi}_{K}(x) = \bm{\xi}_{K}(y) $. Then
\begin{equation*}
x- \bm{\xi}_{K}(x) \in \Nor(K,\bm{\xi}_{K}(x) ), \quad y- \bm{\xi}_{K}(x) \in \Nor(K,\bm{\xi}_{K}(x) )
\end{equation*}
and $|x- \bm{\xi}_{K}(x)| = |y- \bm{\xi}_{K}(x)| = \bm{\delta}_{K}(w)$. Since $ \dim \Nor(K,\bm{\xi}_{K}(x) ) = 1 $, it follows that either $x- \bm{\xi}_{K}(x) =  y- \bm{\xi}_{K}(x) $ or $x- \bm{\xi}_{K}(x) =  \bm{\xi}_{K}(x) - y $. The latter would imply that 
\begin{equation*}
	| x-y | = |x- \bm{\xi}_{K}(x) + \bm{\xi}_{K}(x) - y| = 2 |x-\bm{\xi}_{K}(x)| = 2\bm{\delta}_{K}(w)
\end{equation*}
which is clearly impossible, since $|x-y| < 2\epsilon < 2\bm{\delta}_{K}(w) $. Henceforth $ x= y $ and $ \bm{\xi}_{K}|S $ is injective. Since $ S $ and $ \partial \Omega $ are $(n-1)$-manifolds, we apply Brouwer's theorem on invariance of domain (see \cite[IV,\ 7.4]{MR0415602}) to conclude that $\bm{\xi}_{K}(S) $ is open in $ K $. Noting that
\begin{equation*}
	 \mathbf{U}(x, \bm{\delta}_K(w)) \subseteq \Omega \quad \textrm{and} \quad \bm{\xi}_K(x) \in \partial \mathbf{U}(x, \bm{\delta}_K(w)),
\end{equation*}
for every $ x \in S $, we conclude that $ \Omega $ satisfies an inner uniform ball condition on $ \bm{\xi}_K(S) $.

Suppose $ S \subseteq K $ is open in $ K $ and $ \Omega $ satisfies an inner uniform ball condition on $ S $. Let $ \nu : K \rightarrow \mathbf{S}^{n-1} $ be the inner unit normal of $ \Omega $. Our hypothesis implies that there exists $ \rho > 0 $ such that 
\begin{equation*}
\mathbf{U}(a+\rho\nu(a), \rho) \subseteq \Omega \quad \textrm{for every $ a \in S $.}
\end{equation*}
Define $ \phi : S \times (0, \rho) \rightarrow \mathbf{R}^n $ by $ \phi(a,t) = a + t \nu(a) $ for $(a,t) \in S \times (0, \rho) $. Then $ \phi[S \times (0,\rho)] \subseteq \Omega $. If we prove that $ \phi[S \times (0,\rho)] $ is open in $ \mathbf{R}^n $ and $ \bm{\xi}_K(x) $ is a singleton for every $ x \in \phi[S \times (0,\rho)] $ then it is clear by \ref{aux remark} that $\phi[S \times (0,\rho)]$ does not intersect $ \overline{\Sigma(K)} $ and $ \Omega \sim \overline{\Sigma(K)} \neq \varnothing $. To prove the two assertions above we first show that $ \phi $ is injective. Let $(a,t), (b,s) \in S \times (0, \rho) $ such that $ a + t \nu(a) = b + s \nu(b) $. We notice that 
\begin{equation*}
t = \bm{\delta}_K(a+t\nu(a)) = \bm{\delta}_K(b+s\nu(b)) = s,
\end{equation*}
\begin{equation*}
|a+t\nu(a)-b| = t.
\end{equation*}
If $ a \neq b $ then $ |a+\rho\nu(a) - b| < \rho $ and $ b \in \Omega $, which is a contradiction. Henceforth $ a= b $ and $ \phi $ is injective. If $ b \in \bm{\xi}_K(a + t \nu(a)) $ for some $(a,t) \in S \times (0, \rho) $ then we notice that $ \nu(b) = t^{-1}(a + t\nu(a)-b) $ and $ a = b $ by the injectivity of $ \phi $. Therefore $ \bm{\xi}_K(x) $ is a singleton for every $ x \in \phi[S \times (0, \rho)] $. Moreover, since $ S \times (0, \rho) $ is an $n$-manifold, we conclude that $ \phi[S \times (0, \rho)] $ is an open subset of $ \mathbf{R}^n $ by \cite[IV,\ 7.4]{MR0415602}.
\end{proof}

This corollary shows that every $ \mathcal{C}^1 $-hypersurface that is $ \mathcal{C}^2 $-unrectifiable generates a dense singular set. 

\begin{Corollary}\label{Th 2}
Suppose $ K $ is a closed and connected $ \mathcal{C}^1 $ hypersurface such that $ \mathcal{H}^{n-1}(K \cap M) = 0 $ whenever $ M $ is a $ \mathcal{C}^2 $-hypersurface of $ \mathbf{R}^n $.

Then $ \overline{\Sigma(K)} = \mathbf{R}^n $.
\end{Corollary}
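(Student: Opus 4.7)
The plan is to argue by contradiction: assume $\overline{\Sigma(K)} \neq \mathbf{R}^n$ and pick a point $x_0 \notin \overline{\Sigma(K)}$. I first produce a connected component $\Omega$ of $\mathbf{R}^n \sim K$ with $\partial \Omega = K$ such that $\Omega \sim \overline{\Sigma(K)} \neq \varnothing$. Using the local two-sided structure of a $\mathcal{C}^1$ hypersurface, one checks that for any component $\Omega$ of $\mathbf{R}^n \sim K$ both $K \cap \partial \Omega$ and $K \sim \partial \Omega$ are open in $K$; connectedness of $K$ then forces $\partial \Omega = K$. Choosing $\Omega$ to be the component containing $x_0$ (or, if $x_0 \in K$, a component meeting a small ball around $x_0$ that is disjoint from $\overline{\Sigma(K)}$) ensures $\Omega \sim \overline{\Sigma(K)} \neq \varnothing$.

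Theorem \ref{theorem} now yields an inner uniform ball condition on a non-empty relatively open subset $S$ of $K$, with some radius $\rho > 0$. The key claim to establish is that $S$ is $\mathcal{C}^2$-rectifiable, meaning it is covered, up to $\mathcal{H}^{n-1}$-null sets, by countably many $\mathcal{C}^2$-hypersurfaces of $\mathbf{R}^n$. Once this is in hand, the hypothesis $\mathcal{H}^{n-1}(K \cap M) = 0$ for every $\mathcal{C}^2$-hypersurface $M$ forces $\mathcal{H}^{n-1}(S) = 0$, contradicting the fact that $S$ is a non-empty open subset of the $(n-1)$-dimensional $\mathcal{C}^1$ manifold $K$.

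For the rectifiability of $S$, I localise near a point $a \in S$ and represent $K$ as the graph of a $\mathcal{C}^1$ function $f$ over the tangent hyperplane at $a$. Letting $\nu$ denote the inner unit normal of $\Omega$, the inclusion $\mathbf{U}(a + \rho\nu(a), \rho) \subseteq \Omega$ places $K$ locally near $a$ on the outside of a sphere of radius $\rho$; in graph coordinates this translates to a one-sided quadratic bound of $f$ by a $\mathcal{C}^2$ spherical graph, i.e.\ $f$ is semiconcave with semiconcavity constant $1/\rho$. Semiconcave functions are twice differentiable almost everywhere by Alexandrov's theorem and their graphs are covered modulo Lebesgue-null sets by countably many $\mathcal{C}^2$-graphs via a standard Whitney/Lusin extension applied to $\nabla f$. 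I expect the main obstacle to be this final passage from the one-sided ball condition to $\mathcal{C}^2$-rectifiability; the topological reduction in the first paragraph and the measure-theoretic contradiction in the second are routine.
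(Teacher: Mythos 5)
Your proposal is correct and reaches the conclusion along the same skeleton as the paper --- both proofs hinge on Theorem \ref{theorem} plus the fact that a subset of $K$ on which an inner uniform ball condition holds must be $\mathcal{H}^{n-1}$-negligible under the $\mathcal{C}^2$-unrectifiability hypothesis --- but you establish the key rectifiability input differently. The paper works directly: it invokes the Jordan--Brouwer decomposition of $\mathbf{R}^n \sim K$ into two components $U$, $V$ with $\partial U = \partial V = K$, and then cites the second-order-rectifiable stratification of \cite{MR4012808} to conclude that \emph{any} inner-ball subset of $K$ (open or not) is covered $\mathcal{H}^{n-1}$-a.e.\ by $\mathcal{C}^2$-hypersurfaces, hence null; Theorem \ref{theorem} then forces $U, V \subseteq \overline{\Sigma(K)}$. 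You instead argue by contradiction, use a clopen argument on $K$ to see that the relevant component has full boundary (slightly more elementary than asserting the two-component decomposition, and sufficient), and then replace the citation of \cite{MR4012808} by a hands-on derivation: since Theorem \ref{theorem} hands you an inner-ball set $S$ that is \emph{open} in $K$, the touching balls at every nearby point turn the local $\mathcal{C}^1$ graph function into a semiconcave function, and Alexandrov plus a $\mathcal{C}^2$ Whitney/Lusin argument gives second-order rectifiability. This is a genuine simplification available precisely because $S$ is open (the general stratification result of \cite{MR4012808} must handle sets with no interior), at the cost of re-proving a special case of a cited theorem.

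One small correction to your last step: the Whitney extension should be applied to the full second-order jet $(f, \nabla f, D^2 f)$ on a compact set where the second-order Taylor remainders are uniformly controlled (obtained via Egorov from the strong form of Alexandrov's theorem), not merely to $\nabla f$; a $\mathcal{C}^1$-Lusin approximation of $\nabla f$ alone would match gradients on a large set without matching $f$ itself, which is not enough to place the graph of $f$ inside a $\mathcal{C}^2$-hypersurface. With that formulation the step is the standard $\mathcal{C}^2$-Lusin property of convex (hence semiconcave) functions, and your argument is complete.
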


\begin{proof}
 Let $ U $ and $ V $ the two connected open subsets of $ \mathbf{R}^n $ such that $ \partial U = \partial V = K $, $ U \cap V = \varnothing $ and $ U \cup V \cup K = \mathbf{R}^n $. It follows from \cite{MR4012808} that if $ S $ is a subset of $K$ such that either $ U $ or $ V $ satisfies an inner uniform ball condition on $ S $ then $ \mathcal{H}^{n-1}(S) =0 $. In particular neither $ U $ nor $ V $ can satisfy an inner uniform ball condition on some non empty open subset of $ K $. Therefore we conclude from \ref{theorem} that $ \overline{\Sigma(K)} = \mathbf{R}^n $.
\end{proof}

\begin{Remark}\label{aux remark 1}
	Let $ 0 < \alpha < 1 $. It follows from \cite{MR0427559} that there exists a function $ f : \mathbf{R}^{n-1} \rightarrow \mathbf{R} $ whose graph $K$ is  a closed $ \mathcal{C}^{1, \alpha} $-hypersurface such that 	$ \mathcal{H}^{n-1}(K \cap M) =0 $ for every $ \mathcal{C}^2 $-hypersurface $ M \subseteq \mathbf{R}^n $.
\end{Remark}

\begin{Remark}
It follows from the theory of sets of positive reach (see \cite[\S 4]{MR0110078}) that if $ K $ is a closed $ \mathcal{C}^{1,1} $-hypersuface then there exists an open neighbourhood $ U $ of $ K $  such that $ \overline{\Sigma(K)} \cap U = \varnothing $. 
\end{Remark}

\section{Convex sets}

In this section we show that there exist many $ \mathcal{C}^{1,1} $ convex hypersurfaces $ K $ such that $ \overline{\Sigma(K)} $ has non empty interior. Consequently there exist many viscosity solutions of the Eikonal equation on $ \mathbf{R}^n $ such that the singular set is not no-where dense.

Let $ \mathcal{K}_r^n $ be the space of all compact convex subsets in $ \mathbf{R}^n $ with non empty interior such that $ \partial C $ is a $ \mathcal{C}^1 $  hypersurface. We equip  $ \mathcal{K}^n_r $ with the Hausdorff metric and we recall (see \cite[2.7.1]{MR3155183}) that it is a Baire space\footnote{In fact $ \mathcal{K}^n_r $ is a comeager of the space of all convex bodies (with non empty interior) equipped with the Haussdorf metric.} (i.e.\ countable intersections of dense open subsets are dense). A subset of a metric space is called \emph{meager} if and only if it is countable union of nowhere-dense sets and it is called \emph{comeager} if and only if it is the complementary of a meager set. It is customary to call \emph{typical} the elements of a comeager subset of a Baire space.

The next statement contains the observation that for a typical convex body $C \in \mathcal{K}^n_r$ the distance function from the boundary $ \partial C $ is not differentiable on a dense subset of $ C $. This statement easily follows combining Theorem \ref{theorem} with well known properties of the curvature of a typical convex body.

\begin{Theorem}\label{Th 1}
	For all $ C $ in $ \mathcal{K}^n_r $, except those belonging to a meager subset of $ \mathcal{K}^n_r $, 
	\begin{equation*}
	 C = \overline{\Sigma(\partial C)}.
	\end{equation*}
\end{Theorem}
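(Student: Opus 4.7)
The plan is to identify a comeager subset $\mathcal{G}\subseteq\mathcal{K}^n_r$ on which $C=\overline{\Sigma(\partial C)}$. The inclusion $\overline{\Sigma(\partial C)}\subseteq C$ is free from any genericity: for every $x\in\mathbf{R}^n\setminus C$ the metric projection onto the convex set $C$ is unique, so $\bm{\xi}_{\partial C}(x)$ is a singleton and by \ref{aux remark}, $x\notin\Sigma(\partial C)$. Hence $\Sigma(\partial C)\subseteq\interior C$ and closedness of $C$ gives $\overline{\Sigma(\partial C)}\subseteq C$. Only the reverse inclusion needs a Baire-category argument.

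For the reverse inclusion, I would apply Theorem \ref{theorem} with $\Omega=\interior C$ and $K=\partial C$: the contrapositive of that theorem says that $\interior C\subseteq\overline{\Sigma(\partial C)}$ holds precisely when $\interior C$ satisfies no inner uniform ball condition on any nonempty open subset of $\partial C$, after which $C=\overline{\interior C}\subseteq\overline{\Sigma(\partial C)}$. Setting
\[
r_C(x):=\sup\{\rho>0 : \exists\,B\subseteq\interior C\text{ open ball of radius }\rho\text{ with }x\in\partial B\},\quad x\in\partial C,
\]
the needed property reduces to density of $\{x\in\partial C: r_C(x)<\rho\}$ in $\partial C$ for every $\rho>0$, which is in turn implied by density in $\partial C$ of $\{x: r_C(x)=0\}$.

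To obtain this last density on a comeager subclass of $\mathcal{K}^n_r$ I would invoke the classical Baire-category analysis of the curvature of typical convex bodies (see Schneider \cite{MR3155183}, Section 2.7): for a typical convex body the upper principal radii of curvature vanish at a residual, hence dense, subset of $\partial C$. A standard second-order comparison shows that an inscribed open ball of radius $\rho$ tangent to $\partial C$ at $x$ forces the upper principal radii of curvature at $x$ to be at least $\rho$; hence the generic ``infinite-curvature'' subset of $\partial C$ is contained in $\{r_C=0\}$, giving the required density. Since $\mathcal{K}^n_r$ is itself comeager among all convex bodies, intersecting the two comeager classes yields the desired $\mathcal{G}$, and Theorem \ref{theorem} closes the argument.

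The main obstacle is the quantifier interplay between the typicality assertion provided by Zamfirescu-type theorems (``for typical $C$, a typical $x\in\partial C$ has upper curvature $+\infty$'') and the uniform-in-$\rho$ density actually needed here; the second-order comparison bridges this gap cleanly. A self-contained alternative, avoiding curvature altogether, would be to prove by direct perturbation that for every rational $\rho>0$ and every open ball $U\subseteq\mathbf{R}^n$ the set of $C\in\mathcal{K}^n_r$ admitting an inscribed ball of radius $\geq\rho$ tangent at some relatively open piece of $\partial C\cap U$ is nowhere dense in $\mathcal{K}^n_r$, by small $\mathcal{C}^1$-preserving modifications of the support function that locally destroy such inscribed balls.
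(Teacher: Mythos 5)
Your proposal is correct and follows essentially the same route as the paper: both the paper and you deduce the inclusion $C\subseteq\overline{\Sigma(\partial C)}$ from Theorem \ref{theorem} combined with Schneider's Baire-category analysis of the curvature of typical convex bodies (the paper cites \cite[2.7.4]{MR3155183} directly in the form of a failed inner uniform ball condition on a comeager subset of $\partial C$, so the second-order comparison step you sketch is already packaged in that reference). The only cosmetic difference is that you obtain $\overline{\Sigma(\partial C)}\subseteq C$ from uniqueness of the metric projection onto a convex set, whereas the paper invokes the $\mathcal{C}^{1,1}_{\loc}$ regularity of $\bm{\delta}_{\partial C}$ on $\mathbf{R}^n\sim C$; both are standard one-line facts.
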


\begin{proof}
By \cite[2.7.4]{MR3155183} there exists a comeager $ \mathcal{T} $ of $ \mathcal{K}^n_r $ such that if $ C \in \mathcal{T} $ then $ \interior(C) $ does not satisfy an inner uniform ball condition on a comeager subset of $ \partial C $. It follows from \ref{theorem} that $ C \subseteq \overline{\Sigma(\partial C)} $ for every $ C \in \mathcal{T} $.  On the other hand it is well known that $ \bm{\delta}_{\partial A} \in \mathcal{C}^{1,1}_{\loc}(\mathbf{R}^n \sim A) $ for every convex body $A$ (see for instance \cite[4.8]{MR0110078}) and the conclusion follows.
\end{proof}

\begin{Lemma}\label{Lem 1}
If $ C $ is a convex body then for every $ \epsilon > 0 $ the set 
\begin{equation*}
C_\epsilon = \mathbf{R}^n \cap \{ x : \bm{\delta}_C(x) \leq \epsilon    \}
\end{equation*}
is convex, $ \partial C_\epsilon $ is a $ \mathcal{C}^{1,1} $-hypersurface and $  \Sigma(\partial C) \subseteq \Sigma(\partial C_\epsilon) $.
\end{Lemma}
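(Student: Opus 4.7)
The plan is to dispatch the three assertions in turn. For convexity of $C_\epsilon$, I would use the well-known convexity of $\bm{\delta}_C$ (the distance from a convex set is a convex function on $\mathbf{R}^n$): its sublevel set $C_\epsilon$ is therefore convex. Equivalently, $C_\epsilon = C + \overline{\mathbf{U}(0,\epsilon)}$, a Minkowski sum of two convex bodies, and it is clearly bounded with nonempty interior.

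For the $\mathcal{C}^{1,1}$ regularity of $\partial C_\epsilon$ the strategy is to realize $\partial C_\epsilon$ as a regular level set of a $\mathcal{C}^{1,1}$ function. As recalled in the proof of \ref{Th 1}, for every convex body $A$ one has $\bm{\delta}_A \in \mathcal{C}^{1,1}_{\loc}(\mathbf{R}^n \sim A)$, and by \ref{aux remark} $|\nabla \bm{\delta}_A| = 1$ on this open set. Applying the $\mathcal{C}^{1,1}$-implicit function theorem to the equation $\bm{\delta}_C = \epsilon$ in a neighborhood of $\partial C_\epsilon = \{ x : \bm{\delta}_C(x) = \epsilon \}$ then yields that $\partial C_\epsilon$ is a $\mathcal{C}^{1,1}$ hypersurface.

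The main content is the inclusion $\Sigma(\partial C) \subseteq \Sigma(\partial C_\epsilon)$, which I would obtain from the pointwise identity
\begin{equation*}
\bm{\delta}_{\partial C_\epsilon}(x) = \bm{\delta}_{\partial C}(x) + \epsilon \quad \text{for every } x \in C.
\end{equation*}
First I would observe that $\Sigma(\partial C) \subseteq \interior(C)$: on $\mathbf{R}^n \sim C$ one has $\bm{\delta}_{\partial C} = \bm{\delta}_C$, which is $\mathcal{C}^{1,1}_{\loc}$ and therefore differentiable, while points of $\partial C$ are excluded from $\Sigma(\partial C)$ by definition. For the inequality $\leq$ in the identity, I would pick $a \in \bm{\xi}_{\partial C}(x)$; since $\partial C$ is $\mathcal{C}^1$ the outer unit normal $\nu(a)$ is well defined, and first-order optimality together with the supporting hyperplane of $C$ at $a$ force $a - x = \bm{\delta}_{\partial C}(x)\,\nu(a)$. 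Setting $b = a + \epsilon\,\nu(a)$, the same supporting hyperplane shows $\bm{\delta}_C(b) = \epsilon$, so $b \in \partial C_\epsilon$, while $|x - b| = \bm{\delta}_{\partial C}(x) + \epsilon$ by collinearity of $x$, $a$, $b$. For the inequality $\geq$: any $b \in \partial C_\epsilon$ lies outside $C$, so the segment $[x,b]$ crosses $\partial C$ at some $c$, whence
\begin{equation*}
|x - b| = |x - c| + |c - b| \geq \bm{\delta}_{\partial C}(x) + \bm{\delta}_C(b) = \bm{\delta}_{\partial C}(x) + \epsilon.
\end{equation*}

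Since on $\interior(C)$ the two distance functions differ only by the additive constant $\epsilon$, they have the same non-differentiability points, and as $\Sigma(\partial C) \subseteq \interior(C) \subseteq \interior(C_\epsilon)$ we get $\Sigma(\partial C) \subseteq \Sigma(\partial C_\epsilon)$. The only step that deserves careful bookkeeping is the identification $a - x = \bm{\delta}_{\partial C}(x)\,\nu(a)$ and the verification $\bm{\delta}_C(b) = \epsilon$, both elementary consequences of the supporting hyperplane of $C$ at $a$ and the fact that $\partial C$ is $\mathcal{C}^1$.
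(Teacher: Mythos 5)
Your proposal follows essentially the same route as the paper: the paper's proof consists precisely of declaring convexity evident, citing Federer \cite[4.8]{MR0110078} for the $\mathcal{C}^{1,1}$ regularity of $\partial C_\epsilon$, and observing the identity $\bm{\delta}_{\partial C_\epsilon}(x)=\bm{\delta}_{\partial C}(x)+\epsilon$ for $x\in C$, which is exactly your key step; you simply supply the details the paper omits, and your level-set argument for the regularity of $\partial C_\epsilon$ is a legitimate substitute for the citation.

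One small caveat: the lemma assumes only that $C$ is a convex body, with no regularity on $\partial C$, whereas in proving the inequality $\leq$ you invoke ``since $\partial C$ is $\mathcal{C}^1$ the outer unit normal $\nu(a)$ is well defined.'' This hypothesis is not available (and is not needed): for $x\in\interior(C)$ and $a\in\bm{\xi}_{\partial C}(x)$, the closed ball $\overline{\mathbf{U}(x,|x-a|)}$ is contained in $C$ and touches $\partial C$ at $a$, so \emph{any} supporting hyperplane of $C$ at $a$ must also support that ball and hence has outer normal $(a-x)/|a-x|$; with this normal in place your construction of $b=a+\epsilon\,(a-x)/|a-x|$ and the verification $\bm{\delta}_C(b)=\epsilon$ go through verbatim. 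With that adjustment the proof covers the statement in its stated generality (which is all that is needed, since the application in \ref{Cor 2} only uses $C$ with $\mathcal{C}^1$ boundary anyway).
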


\begin{proof}Evidently $ C_\epsilon $ is a convex body and is well known that $ \partial C_\epsilon $ is a $ \mathcal{C}^{1,1} $ hypersurface (see \cite[4.8]{MR0110078}). We observe that
\begin{equation*}
\bm{\delta}_{\partial C_\epsilon}(x) = \epsilon + \bm{\delta}_{\partial C}(x) \quad \textrm{for $ x \in C $,}
	\end{equation*} 
	and we conclude that $ \Sigma(\partial C) \subseteq \Sigma(\partial C_\epsilon) $.
\end{proof}

\begin{Theorem}\label{Cor 2}
There exists $ C \in \mathcal{K}_r^n $ such that $ \partial C $ is a $ \mathcal{C}^{1,1} $-hypersurface and $ \overline{\Sigma(\partial C)} $ has non empty interior. Moreover the function $ u : \mathbf{R}^n \rightarrow \mathbf{R} $ defined by
\begin{equation*}
 u(x) = \bm{\delta}_{\partial C}(x) \quad \textrm{for $ x \in C $} \qquad \textrm{and} \qquad  u(x) = -\bm{\delta}_{\partial C}(x) \quad \textrm{for $ x \in \mathbf{R}^n \sim C $}
\end{equation*}
is a viscosity solution of the Eikonal equation on $ \mathbf{R}^n $ and the closure of the set of points where $ u $ is not differentiable has non empty interior.
\end{Theorem}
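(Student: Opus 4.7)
The plan is to obtain the desired example by composing Theorem \ref{Th 1} with Lemma \ref{Lem 1}, and then to verify the viscosity and non-differentiability assertions by a local analysis on three overlapping open regions.

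First I would invoke Theorem \ref{Th 1} to select a convex body $ C_0 \in \mathcal{K}^n_r $ for which $ C_0 = \overline{\Sigma(\partial C_0)} $. Such a $ C_0 $ has only $ \mathcal{C}^1 $-regularity in general, so to upgrade to $ \mathcal{C}^{1,1} $-regularity I would fix any $ \epsilon > 0 $ and set $ C = (C_0)_\epsilon $, the closed $ \epsilon $-neighbourhood of $ C_0 $. Lemma \ref{Lem 1} then guarantees that $ C $ is a convex body with $ \mathcal{C}^{1,1} $ boundary and that $ \Sigma(\partial C_0) \subseteq \Sigma(\partial C) $. Consequently $ \overline{\Sigma(\partial C)} \supseteq \overline{\Sigma(\partial C_0)} = C_0 $, and $ C_0 $ has non-empty interior, proving the first assertion.

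For the viscosity assertion I would check that $ u $ solves $ |\nabla u|^2 = 1 $ in the viscosity sense on each of the open sets $ \interior(C) $, $ \mathbf{R}^n \sim C $, and a tubular neighbourhood $ U $ of $ \partial C $, whose union is $ \mathbf{R}^n $; the locality of the viscosity property then yields the claim globally. On $ \interior(C) $ we have $ u = \bm{\delta}_{\partial C} $, which is a viscosity solution on $ \mathbf{R}^n \sim \partial C $ by the standard Hamilton--Jacobi theory. On $ \mathbf{R}^n \sim C $, closedness of $ C $ gives $ \bm{\delta}_{\partial C}(x) = \bm{\delta}_C(x) $, and by convexity $ \bm{\delta}_C \in \mathcal{C}^{1,1}_{\loc}(\mathbf{R}^n \sim C) $ with unit gradient, so $ u = -\bm{\delta}_C $ is a classical solution there. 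Finally, since $ \partial C $ is $ \mathcal{C}^{1,1} $, the signed distance function to $ \partial C $ is $ \mathcal{C}^{1,1} $ with unit gradient in a tubular neighbourhood of $ \partial C $, and $ u $ coincides locally with this signed distance on both sides of $ \partial C $, so $ u $ is $ \mathcal{C}^{1,1} $ with $|\nabla u| = 1 $ on $ U $.

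For the last assertion I would pin down the non-differentiability set of $ u $. On $ U \cup (\mathbf{R}^n \sim C) $ the function $ u $ is $ \mathcal{C}^{1,1} $ by the previous paragraph, hence differentiable; conversely $ \Sigma(\partial C) \subseteq \interior(C) $ since $ \bm{\delta}_{\partial C} $ is $ \mathcal{C}^{1,1} $ both on $ U $ and on $ \mathbf{R}^n \sim C $. Thus the set of points at which $ u $ fails to be differentiable coincides with $ \Sigma(\partial C) $, whose closure contains $ C_0 $ and therefore has non-empty interior. The only delicate ingredient is the $ \mathcal{C}^{1,1} $-regularity of the signed distance in a tubular neighbourhood of a $ \mathcal{C}^{1,1} $ hypersurface, which is what allows us to glue the three regimes together across $ \partial C $; this is a classical consequence of positive reach and so essentially no genuine obstacle arises.
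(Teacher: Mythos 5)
Your proposal is correct and follows essentially the same route as the paper: the example is produced by combining Theorem \ref{Th 1} with Lemma \ref{Lem 1}, and the viscosity property is verified on the same three overlapping regions (interior, exterior, and a tubular neighbourhood of $\partial C$ coming from positive reach). The only cosmetic difference is that on $\interior(C)$ the paper derives the viscosity property from local semiconcavity plus the a.e.\ equation via \cite[5.3.1]{MR2041617}, whereas you invoke directly the standard fact that the distance function from a closed set is a viscosity solution of the Eikonal equation on the complement.
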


\begin{proof}
The existence of a convex body $ C $ such that $ \partial C $ is a $ \mathcal{C}^{1,1} $ hypersurface and $ \overline{\Sigma(\partial C)} $ has non empty interior directly follows from \ref{Th 1} and \ref{Lem 1}.

It follows from \cite[4.20]{MR0110078} that $ \partial C $ has positive reach. Therefore one infers from \cite[Theorem 2]{MR614221} that there exists an open neighborhood $ U $ of $ \partial C $ such that $ u $ is continuously differentiable on $ U $. Since it is clear that $ u $ is continuously differentiable on $ \mathbf{R}^n \sim C $, we conclude by \ref{aux remark} that
\begin{equation*}
	| \nabla u(x) |^2 = 1 \quad \textrm{for every $ x \in (\mathbf{R}^n \sim C) \cup U $.}
\end{equation*}
Moreover $ u $ is locally semiconcave on the interior $ \interior(C) $ of $ C $ and $ |\nabla u(x)|^2  = 1 $ for $ \mathcal{L}^n $ a.e.\ $ x \in \interior(C) $. Henceforth, it follows from \cite[5.3.1]{MR2041617} that $ | \nabla u |^2 = 1 $ in the viscosity sense in $ \interior(C) $. It is now evident that $ | \nabla u |^2 = 1 $ in the viscosity sense in $ \mathbf{R}^n $.
\end{proof}



\begin{thebibliography}{ACNS13}
	
	\bibitem[ACNS13]{MR3038120}
	P.~Albano, P.~Cannarsa, Khai~T. Nguyen, and C.~Sinestrari.
	\newblock Singular gradient flow of the distance function and homotopy
	equivalence.
	\newblock {\em Math. Ann.}, 356(1):23--43, 2013.
	
	\bibitem[Cla83]{MR709590}
	Frank~H. Clarke.
	\newblock {\em Optimization and nonsmooth analysis}.
	\newblock Canadian Mathematical Society Series of Monographs and Advanced
	Texts. John Wiley \& Sons, Inc., New York, 1983.
	\newblock A Wiley-Interscience Publication.
	
	\bibitem[CM07]{MR2336304}
	Graziano Crasta and Annalisa Malusa.
	\newblock The distance function from the boundary in a {M}inkowski space.
	\newblock {\em Trans. Amer. Math. Soc.}, 359(12):5725--5759, 2007.
	
	\bibitem[CS04]{MR2041617}
	Piermarco Cannarsa and Carlo Sinestrari.
	\newblock {\em Semiconcave functions, {H}amilton-{J}acobi equations, and
		optimal control}, volume~58 of {\em Progress in Nonlinear Differential
		Equations and their Applications}.
	\newblock Birkh\"{a}user Boston, Inc., Boston, MA, 2004.
	
	\bibitem[Dol72]{MR0415602}
	A.~Dold.
	\newblock {\em Lectures on algebraic topology}.
	\newblock Springer-Verlag, New York-Berlin, 1972.
	\newblock Die Grundlehren der mathematischen Wissenschaften, Band 200.
	
	\bibitem[Fed59]{MR0110078}
	Herbert Federer.
	\newblock Curvature measures.
	\newblock {\em Trans. Amer. Math. Soc.}, 93:418--491, 1959.
	
	\bibitem[IT01]{MR1695025}
	Jin-ichi Itoh and Minoru Tanaka.
	\newblock The {L}ipschitz continuity of the distance function to the cut locus.
	\newblock {\em Trans. Amer. Math. Soc.}, 353(1):21--40, 2001.
	
	\bibitem[Koh77]{MR0427559}
	Robert~V. Kohn.
	\newblock An example concerning approximate differentiation.
	\newblock {\em Indiana Univ. Math. J.}, 26(2):393--397, 1977.
	
	\bibitem[KP81]{MR614221}
	Steven~G. Krantz and Harold~R. Parks.
	\newblock Distance to {$C^{k}$} hypersurfaces.
	\newblock {\em J. Differential Equations}, 40(1):116--120, 1981.
	
	\bibitem[LN05]{MR2094267}
	Yanyan Li and Louis Nirenberg.
	\newblock The distance function to the boundary, {F}insler geometry, and the
	singular set of viscosity solutions of some {H}amilton-{J}acobi equations.
	\newblock {\em Comm. Pure Appl. Math.}, 58(1):85--146, 2005.
	
	\bibitem[Miu16]{MR3568029}
	Tatsuya Miura.
	\newblock A characterization of cut locus for {$C^1$} hypersurfaces.
	\newblock {\em NoDEA Nonlinear Differential Equations Appl.}, 23(6):Art. 60,
	14, 2016.
	
	\bibitem[MM03]{MR1941909}
	Carlo Mantegazza and Andrea~Carlo Mennucci.
	\newblock Hamilton-{J}acobi equations and distance functions on {R}iemannian
	manifolds.
	\newblock {\em Appl. Math. Optim.}, 47(1):1--25, 2003.
	
	\bibitem[MS19]{MR4012808}
	Ulrich Menne and Mario Santilli.
	\newblock A geometric second-order-rectifiable stratification for closed
	subsets of {E}uclidean space.
	\newblock {\em Ann. Sc. Norm. Super. Pisa Cl. Sci. (5)}, 19(3):1185--1198,
	2019.
	
	\bibitem[Rif08]{MR2398240}
	Ludovic Rifford.
	\newblock On viscosity solutions of certain {H}amilton-{J}acobi equations:
	regularity results and generalized {S}ard's theorems.
	\newblock {\em Comm. Partial Differential Equations}, 33(1-3):517--559, 2008.
	
		\bibitem[Rif20]{Erratum}
	Ludovic Rifford.
	\newblock Erratum: On viscosity solutions of certain {H}amilton-{J}acobi equations:
	regularity results and generalized {S}ard's theorems.
	\newblock {\em In preparation}. 
	
	\bibitem[Sch14]{MR3155183}
	Rolf Schneider.
	\newblock {\em Convex bodies: the {B}runn-{M}inkowski theory}, volume 151 of
	{\em Encyclopedia of Mathematics and its Applications}.
	\newblock Cambridge University Press, Cambridge, expanded edition, 2014.
	
	\bibitem[Zaj79]{MR536060}
	Lud\v{e}k Zaj\'{\i}\v{c}ek.
	\newblock On the differentiation of convex functions in finite and infinite
	dimensional spaces.
	\newblock {\em Czechoslovak Math. J.}, 29(104)(3):340--348, 1979.
	
\end{thebibliography}

\medskip 

\noindent Institut f\"ur Mathematik, Universit\"at Augsburg, \newline Universit\"atsstr.\ 14, 86159, Augsburg, Germany,
\newline mario.santilli@math.uni-augsburg.de

\end{document}